\documentclass[11pt,reqno]{amsart}

\usepackage{enumerate}
\usepackage{amsmath, amssymb, amsthm}
\usepackage{mathrsfs}
\usepackage{esint}
\usepackage[dvipsnames]{xcolor}
\usepackage{mathtools}
\usepackage{hyperref}
\usepackage{bm}
\usepackage{tikz}
\usepackage{pgfplots}
\usepackage{float,kotex,graphicx}
\pgfplotsset{compat=1.18}

\numberwithin{equation}{section}
\newtheorem{theorem}{Theorem}[section]
\newtheorem{lemma}[theorem]{Lemma}

\newtheorem{remark}[theorem]{\bf{Remark}}

\newtheorem{definition}[theorem]{Definition}

\theoremstyle{remark}
\theoremstyle{definition}

\def \data {./data}

\newcommand{\emphTitle}[1]{\medskip\noindent\textbf{#1.}\quad}

\def \L {L_2}
\def \H {H^2_2}
\def \C {\cC^{0.8}}

\newcommand\bR{\mathbb{R}}

\newcommand\cC{\mathcal{C}}

\newcommand\cF{\mathcal{F}}

\newcommand\cL{\mathcal{L}}

\newcommand\cbrk{\text{$]$\kern-.15em$]$}}
\newcommand\opar{\text{\,\raise.2ex\hbox{${\scriptstyle
|}$}\kern-.34em$($}}
\newcommand\cpar{\text{$)$\kern-.34em\raise.2ex\hbox{${\scriptstyle |}$}}\,}

\newcommand\ep{\varepsilon}

\begin{document}

\title[]
{A Multiplicative Neural Network Architecture:\\
Locality and Regularity of Approximation
}

\author{Hee-Sun Choi}
\address{Department of Artificial Intelligence and Robotics, 
Sejong University, 
Seoul, Republic of Korea}
\email{heesunchoi@sejong.ac.kr}

\author{Beom-Seok Han$^*$}
\address{Department of Mathematics,
Sungshin Women’s University,
Seoul, Republic of Korea}
\email{b\_han@sungshin.ac.kr}
\thanks{$^*$ Corresponding author, This work was supported by the Sungshin Women’s University Research Grant of 2024}

\subjclass[2020]{}

\keywords{
    Multiplicative neural network architecture;
    Locality of approximation;
    Regularity-sensitive analysis;
    Zygmund seminorm;
    Sobolev-Bessel spaces;
    Universal Approximation Theorem
}


\begin{abstract}
We introduce a multiplicative neural network architecture in which multiplicative interactions constitute the fundamental representation, rather than appearing as auxiliary components within an additive model.
We establish a universal approximation theorem for this architecture and analyze its approximation properties in terms of locality and regularity in Bessel potential spaces.

To complement the theoretical results, we conduct numerical experiments on representative targets exhibiting sharp transition layers or pointwise loss of higher-order regularity.
The experiments focus on the spatial structure of approximation errors and on regularity-sensitive quantities, in particular, the convergence of Zygmund-type seminorms.
The results show that the proposed multiplicative architecture yields residual error structures that are more tightly aligned with regions of reduced regularity and exhibit more stable convergence in regularity-sensitive metrics.

These results demonstrate that adopting a multiplicative representation format has concrete implications for the localization and regularity behavior of neural network approximations, providing a direct connection between architectural design and analytical properties of the approximating functions.
\end{abstract}

\maketitle

\section{Introduction}
\label{sec:intro}

From a mathematical perspective, the theory of neural networks is fundamentally grounded in the approximation of functions. (\cite{cybenko1989approximation,hornik1991approximation,leshno1993multilayer,pinkus1999approximation})
For example, \cite{cybenko1989approximation} introduced the approximation framework based on a class of univariate functions represented by a single-layer feedforward neural network of the form
\begin{equation}
    \label{form of standard NN}
G(x) \;=\; \sum_{j=1}^{n} \alpha_j\, \sigma(w_j\cdot x + b_j),
\end{equation}
where $x = (x^{1},x^{2},\dots,x^{m}) \in [0,1]^m$ denotes the input vector, $\cdot$ is the dot product, $\sigma: \bR \to \bR$ is a fixed non-linear activation function applied to each neuron in the hidden layer, $w_j \in \bR^m$ and $b_j \in \bR$ are the weight vector and bias of the $j$-th hidden neuron, and $\alpha_j \in \bR$ is the corresponding output weight, respectively.

A fundamental property of single-layer feedforward networks is their ability to \textit{universally approximate} continuous functions.  The following theorem, due to Cybenko \cite{cybenko1989approximation}, establishes that a single hidden layer and a suitable activation can approximate any continuous function on a compact domain with arbitrary precision.
\begin{theorem}[Universal Approximation -- Cybenko] 
\label{Cybenko theorem}
The set of functions of the form \eqref{form of standard NN} is dense in $C([0,1]^m)$, the space of continuous real-valued functions on the unit cube. That is, for any $f\in C([0,1]^m)$ and any $\ep>0$, there exists a function $G$ of type \eqref{form of standard NN} with finitely many neurons $n$ such that 
\begin{equation*}
\sup_{x\in [0,1]^{m}}|G(x)-f(x)|<\ep.
\end{equation*}
\end{theorem}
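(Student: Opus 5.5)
We follow Cybenko's original functional-analytic argument. Throughout we take $\sigma$ to be a continuous sigmoidal function, i.e.\ $\sigma(t)\to 1$ as $t\to+\infty$ and $\sigma(t)\to 0$ as $t\to-\infty$; this is the setting of \cite{cybenko1989approximation}.

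Let $S\subset C([0,1]^m)$ be the linear span of the ridge functions $x\mapsto\sigma(w\cdot x+b)$. Suppose, for contradiction, that the closure $\overline{S}$ is a proper closed subspace. By the Hahn--Banach theorem there is a nonzero bounded linear functional $L$ on $C([0,1]^m)$ that vanishes on $\overline{S}$. By the Riesz representation theorem, $L(h)=\int h\,d\mu$ for some nonzero finite signed regular Borel measure $\mu$ on $[0,1]^m$. Hence
\begin{equation*}
\int_{[0,1]^m}\sigma(w\cdot x+b)\,d\mu(x)=0\qquad\text{for all } w\in\bR^m,\ b\in\bR .
\end{equation*}
The theorem follows once we show that this forces $\mu=0$. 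In Cybenko's terminology, this says that $\sigma$ is \emph{discriminatory}.

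\textbf{Step 1: reduction to indicators of half-spaces.} Fix $w$, $b$, $\theta$ and set $\sigma_\lambda(x)=\sigma\bigl(\lambda(w\cdot x+b)+\theta\bigr)$. As $\lambda\to\infty$, $\sigma_\lambda(x)$ converges pointwise:
\begin{itemize}
\item to $1$ on the open half-space $\Pi^+=\{w\cdot x+b>0\}$;
\item to $0$ on $\Pi^-=\{w\cdot x+b<0\}$;
\item to $\sigma(\theta)$ on the hyperplane $\Pi^0=\{w\cdot x+b=0\}$.
\end{itemize}
The functions $\sigma_\lambda$ are uniformly bounded, since a continuous sigmoidal function is bounded. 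Dominated convergence with respect to $|\mu|$ therefore gives
\begin{equation*}
0=\mu(\Pi^+)+\sigma(\theta)\,\mu(\Pi^0).
\end{equation*}
Letting $\theta\to\pm\infty$ yields $\mu(\Pi^0)=0$ and $\mu(\Pi^+)=0$ for every $w$ and $b$. Consequently $\mu$ annihilates the indicator of every closed half-space $\{w\cdot x+b\ge 0\}$.

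\textbf{Step 2: from half-spaces to $\mu=0$.} This is the main obstacle, because half-spaces alone do not obviously generate enough test functions. Fix a direction $w$ and define the bounded linear functional $F(h)=\int h(w\cdot x)\,d\mu(x)$ on bounded Borel functions $h$ on $\bR$. By Step 1, $F$ vanishes on indicators of intervals $[c,\infty)$, and hence on indicators of bounded intervals, which are differences of these. It then vanishes on simple functions built from intervals, and by uniform approximation on the compact range of $w\cdot x$ it vanishes on all bounded continuous $h$. Applying this to $h=\cos$ and $h=\sin$ gives
\begin{equation*}
\hat\mu(w)=\int e^{i w\cdot x}\,d\mu(x)=0\qquad\text{for every } w\in\bR^m .
\end{equation*}
Since a finite measure is determined by its Fourier transform, $\mu=0$.

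This contradicts the choice of $\mu\neq 0$. Therefore $\overline{S}=C([0,1]^m)$, which is exactly the density statement of the theorem: for every $f\in C([0,1]^m)$ and $\ep>0$ there is a finite sum $G$ of the form \eqref{form of standard NN} with $\sup_{[0,1]^m}|G-f|<\ep$.
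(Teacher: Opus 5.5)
Your proof is correct and is exactly Cybenko's original argument: Hahn--Banach and Riesz representation, followed by the proof that a continuous sigmoidal $\sigma$ is discriminatory via limits to half-space indicators and uniqueness of the Fourier--Stieltjes transform. The paper gives no proof of its own here and simply cites \cite{cybenko1989approximation}, so your route is the one it relies on. You were also right to make the sigmoidal hypothesis explicit, since the paper's ``fixed non-linear activation'' does not suffice by itself: for $\sigma(t)=t^2$ the span consists only of polynomials of degree at most two.
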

Theorem \ref{Cybenko theorem} implies that standard multilayer perceptrons (MLPs) with a single hidden layer and a suitable activation function are capable of approximating any continuous function defined on a bounded domain.

However, the structural design of MLPs often gives rise to nonlocal effects when employed for function approximation. Here, nonlocality refers to the mismatch between the spatial extent of neuron activations and the localized nature of the target features to be approximated. To illustrate this phenomenon, we consider the one-dimensional case first. Let $f(x):[0,1]\to\bR$ be a given continuous function. Using the network form introduced in Theorem \ref{Cybenko theorem}, we construct a simple function $s(x)$, which is a piecewise constant approximation of $f(x)$ from below.
\begin{center}
\includegraphics[scale=0.5]{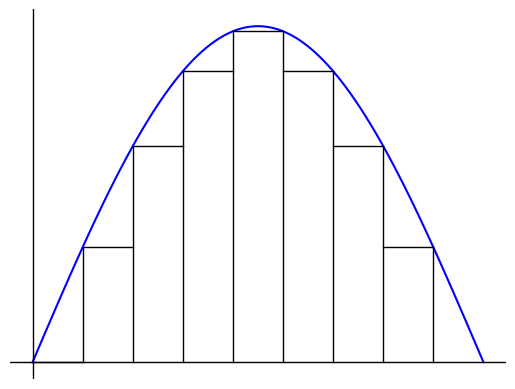}\\
\scriptsize Figure 1. A simple function $s(x)$ approximating a continuous function $f(x)$ from below.
\end{center}
The function $s(x)$ can be represented as
\begin{equation*}
s(x) = \sum_{j = 1}^{n} \alpha_{j}\, \sigma(w_{j}x + b_{j}),
\end{equation*}
where $\sigma(\cdot)$ denotes an indicator function. The function $s(x)$ is a linear combination of indicator functions, each supported on a small subinterval. If the indicator $\sigma(\cdot)$ is replaced by a smooth approximation--such as a mollified bump function--serving as an activation function in a neural network, then each term $\sigma(w_{j}x + b_{j})$ can be interpreted as a single neuron whose activation is effectively concentrated on a localized region. Intuitively, constructing the approximation $s(x)$ requires $n$ such neurons, each corresponding to one mollified indicator.

Now, we turn to the two-dimensional case. Let $f(x,y):[0,1]^{2}\to\bR$ be a continuous function. In this setting, an analogous simple function $s(x,y)$ is used to approximate $f(x,y)$, defined by
\begin{equation*}
s(x,y) = \sum_{j = 1}^{n}\alpha_{j}\sigma(w_{j}^{1}x+w_{j}^{2}y+b_{j}).
\end{equation*}
Here, the input to each neuron is a linear combination of the input variables, $x$, $y$, and the bias term. As a result, the region where a single neuron $\sigma(w_{1j}x+w_{2j}y+b_{j})$ active corresponds to a strip bounded by two parallel lines in the input domain. Figure 2 illustrates the support of such a neuron as the shaded region bounded by two parallel lines.
\begin{center}
\includegraphics[scale=0.5]{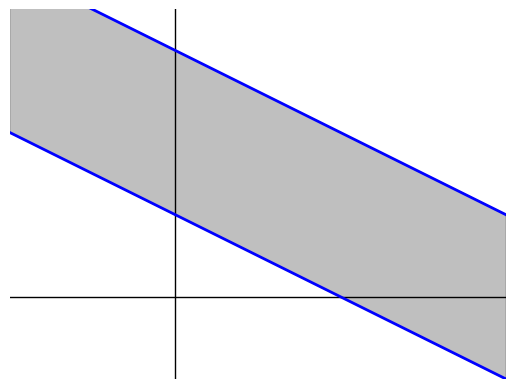}\\
\scriptsize Figure 2. The nonzero region (gray) of a single neuron $\sigma(w_{j}^{1}x + w_{j}^{2}y + b_j)$ in two dimensions.
\end{center}

Furthermore, the activation region resulting from the linear combination of two neurons is illustrated in Figure 3.
\begin{center}
\includegraphics[scale=0.5]{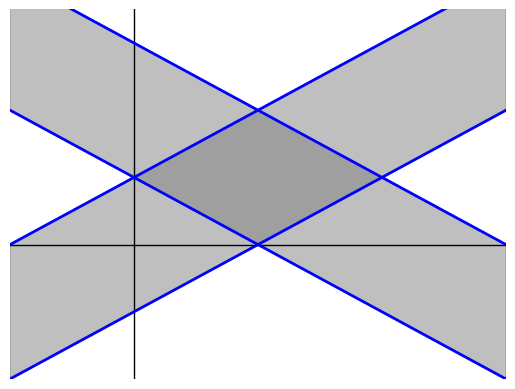}\\
\scriptsize Figure 3. Overlapping activation regions of two neurons. The darker region indicates the area of joint activation.
\end{center}
Observe that the intersection of the individual activation regions forms the primary domain contributing to the approximation. However, due to the global nature of each neuron, redundant activation outside the target region also occurs.

Therefore, the nonlocal nature of MLPs arises from their architecture. For instance, when approximating a compactly supported bump function or an indicator function in a multi-dimensional domain, a standard MLP may require a large number of neurons whose activation regions significantly extend beyond the region of interest. This leads to redundant computation and inefficiency in representing localized structures, thereby highlighting the intrinsic nonlocality of the architecture. From a geometric viewpoint, localization is naturally achieved by intersections of activation
regions, whereas standard additive MLPs can only realize such intersections indirectly
through superpositions of globally supported neurons.

To detour the nonlocal property, we propose a new version of the MLP architecture, referred to as the \textit{Multiplicative MLP} (MMLP), which introduces \textbf{multiplicative interactions} into the model’s neurons. As an representative example, consider a function of the form
\begin{equation}
\label{Multiplicative MLP example}
\sigma(w^{1}x+b^{1})\sigma(w^{2}y+b^{2}),
\end{equation}
where $w^{1},w^{2},b^{1},b^{2}\in \bR$.
The activation region corresponding to \eqref{Multiplicative MLP example} is illustrated in Figure 4.
\begin{center}
\includegraphics[scale=0.5]{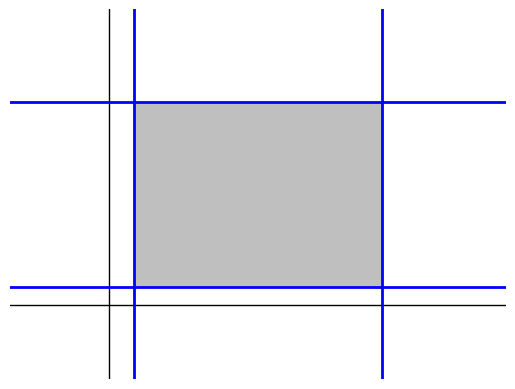}\\
\scriptsize Figure 4. Activation region of a multiplicative unit, forming a rectangular domain in the input space.
\end{center}
Note that the activation domain in this case forms a rectangular region, as it is defined by the simultaneous activation of two independent one-dimensional neurons. In contrast to standard MLPs, the basic building block of the approximation is localized without redundant overlaps.

Neural network architectures involving multiplicative interactions have appeared in earlier
work, most notably in the study of higher--order and product--type networks.
In particular, sigma--pi networks and sum--of--product architectures replace additive hidden
units by products of one--dimensional responses, and their approximation properties have been
investigated in a variety of functional settings; see, for example,
\cite{lenze1994sigmapi,lickli2003spsnn,lincsli2000sopnn,long2007uniform,luo2000lp}.
In this literature, approximation capability is typically understood in the sense of density
or global approximation in classical function spaces, and the resulting approximation
classes may extend beyond those generated by purely additive architectures.

The perspective taken in the present work is of a different nature.
Rather than focusing on global approximation capability alone, we emphasize the geometric and
analytic structure induced by multiplicative units themselves.
A central observation is that a product of one--dimensional activations induces spatial
localization through intersection of activation regions, already at the level of a single
multiplicative unit.
This property makes it possible to interpret a single multiplicative unit as a localized
mollifier--type kernel, in contrast to additive architectures, where comparable localization
typically requires superposition of many globally supported units.

This mollifier interpretation provides the link to approximation theory in function spaces
with regularity.
In particular, it allows one to employ the approximation framework of Bessel potential spaces
$H_p^\gamma(\mathbb{R}^m)$ and, via Sobolev embedding, to obtain corresponding results in the
associated Zygmund spaces.
The results presented below make this connection precise and show that multiplicative
architectures admit a regularity--aware mode of approximation that is not available for
standard additive MLPs.

In this paper, informally speaking, we show that neural networks equipped with multiplicative units can approximate functions not only universally, but also locally and with controlled regularity, in the sense that they are dense in Sobolev–Bessel and Zygmund-type function spaces.

To formulate our approximation result in a setting that simultaneously captures smoothness and is compatible with Fourier-analytic techniques underlying multiplicative structures, we recall the definition of Bessel potential spaces and Zygmund spaces. For more information, see \cite{grafakos2009modern,krylov2024lectures,triebel1983theory}

\begin{definition}[$L_p$ spaces]
For $1\le p<\infty$,  the space $L_p(\bR^m)$ consists of all (equivalence classes of) measurable functions
$u:\bR^m\to\mathbb{R}$ such that
\[
\|u\|_{L_p(\bR^m)}
:= \left(\int_{\bR^m} |u(x)|^p\,dx\right)^{1/p} < \infty.
\]
\end{definition}

\begin{definition}[Bessel Potential Space]
For $p>1$ and $\gamma \in \bR$, let $H_{p}^{\gamma}(\bR^{m})$ denote the class of tempered distributions $f$ on $\bR^m$ such that
\begin{equation*}
\| f \|_{H_{p}^{\gamma}(\bR^{m})} 
:= \left\| (1-\Delta)^{\gamma/2} f  \right\|_{L_p(\bR^m)} 
= \left\| \cF^{-1}\left[ \left(1+|\cdot|^2\right)^{\gamma/2}\cF[f]\right] \right\|_{L_p(\bR^m)}
<\infty,
\end{equation*}
Here, $\cF$ denotes the $m$-dimensional Fourier transform and, $\cF^{-1}$ denotes the $m$-dimensional inverse Fourier transform. 
\end{definition}

\begin{remark}[Relation to classical Sobolev spaces]
When the regularity $\gamma$ is a nonnegative integer, the Bessel potential space
$H_p^\gamma(\mathbb{R}^m)$ coincides with the classical Sobolev space
$W_{p}^{\gamma}(\mathbb{R}^m)$.
More precisely, for $\gamma\in\{0,1,2,\dots\}$,
\[
H_p^\gamma(\mathbb{R}^m)=W_{p}^{\gamma}(\mathbb{R}^m),
\]
with equivalence of norms.
\end{remark}

\begin{definition}[Zygmund Space]
\label{zygmund space}
For $k = 0,1,2,\dots$ and $\alpha \in (0,1]$, and a multi-index $\beta$ with $|\beta|=k$, set
\[
[f]_{\cC^{k+\alpha}(\bR^{m})} := 
\sup_{x \in \bR^m,\, h \neq 0}
\frac{\big| D^\beta f(x+h) + D^\beta f(x-h) - 2D^\beta f(x) \big|}{|h|^\alpha},
\]
and
\[
|f|_{\cC^k(\bR^{m})} := \sum_{j=0}^k \sup_{|\beta|=j}\, \sup_{x \in \mathbb{R}^m} |D^\beta f(x)|,
\qquad
|f|_{\cC^{k+\alpha}(\bR^{m})} := |f|_{\cC^k(\bR^{m})} + [f]_{\cC^{k+\alpha}(\bR^{m})}.
\]

We define $\cC^{k+\alpha}(\mathbb{R}^m)$ as the set of bounded continuous functions on $\mathbb{R}^d$ such that the finite norm 
$|f|_{\cC^{k+\alpha}(\bR^{m})} < \infty$.
\end{definition}

\begin{remark}
When $\alpha\in(0,1)$, $\cC^{k+\alpha}(\mathbb{R}^m)$ coincides with the classical H\"older space $C^{k,\alpha}(\bR^{m})$.
When $\alpha=1$, however, the Zygmund space $\cC^{k+1}(\bR^{m})$ properly contains $C^{k,1}(\bR^{m})$,
thus providing the natural endpoint of the H\"older scale.
\end{remark}

\begin{lemma}
\label{embedding lemma}
Let $p\in (1,\infty)$ and $\gamma>m/p$. Then 
\begin{equation*}
H_p^\gamma(\mathbb{R}^m) \hookrightarrow \cC^{\gamma - \frac{m}{p}}(\mathbb{R}^m).
\end{equation*}
In other words, if $\gamma - m/p = \eta+\nu$ for some $\eta=0,1,\cdots$ and $\nu\in(0,1]$, then for any  $k\in\{ 0,1,\cdots,\eta \}$, we have
\begin{equation*} 
| D^k f |_{C(\bR^{m})} + | D^\eta f |_{\cC^\nu(\bR^{m})} \leq N \| f \|_{H_{p}^\gamma(\bR^{m})}.
\end{equation*}

\end{lemma}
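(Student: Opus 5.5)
This is the classical Sobolev embedding for Bessel potential spaces into Zygmund spaces. I would prove it through the Littlewood--Paley characterization.

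\textbf{Step 1: reduce to a Besov embedding.} Fix a dyadic partition of unity $\{\varphi_j\}_{j\ge0}$ with $\operatorname{supp}\varphi_j\subset\{|\xi|\sim 2^j\}$ for $j\ge1$, and write $\Delta_j f=\cF^{-1}[\varphi_j\cF f]$. Three standard facts give the chain
\[
H_p^\gamma\hookrightarrow B^\gamma_{p,\infty}\hookrightarrow B^{\gamma-m/p}_{\infty,\infty}=\cC^{\gamma-m/p}.
\]
\begin{enumerate}[(i)]
\item $\|\Delta_j f\|_{L_p}\le N2^{-j\gamma}\|f\|_{H^\gamma_p}$. To see this, write $\Delta_j f=\cF^{-1}[\varphi_j(1+|\xi|^2)^{-\gamma/2}]*(1-\Delta)^{\gamma/2}f$. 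By rescaling, the kernel has $L_1$ norm $\lesssim 2^{-j\gamma}$ uniformly in $j$. Then apply Young's inequality.
\item Bernstein's inequality: since $\Delta_j f$ has Fourier support in a ball of radius $\sim2^j$, we have $\|\Delta_j f\|_{L_\infty}\le N2^{jm/p}\|\Delta_j f\|_{L_p}$. Prove this by writing $\Delta_j f=\psi_j*\Delta_j f$ with $\psi_j=2^{jm}\psi(2^j\cdot)$ and using Hölder's inequality.
\item The identification $B^s_{\infty,\infty}=\cC^s$ with equivalent norms, for $s=\eta+\nu$ with $\nu\in(0,1]$, in the sense of Definition~\ref{zygmund space}.
\end{enumerate}
Combining (i) and (ii) gives $\sup_j 2^{j(\gamma-m/p)}\|\Delta_j f\|_{L_\infty}\le N\|f\|_{H^\gamma_p}$.

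\textbf{Step 2: prove the direction of (iii) we need, from dyadic bounds to the Zygmund norm.} Set $s=\gamma-m/p>0$.

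For $|\beta|=k\le\eta$, we have $\|D^\beta\Delta_j f\|_\infty\lesssim 2^{j(k-s)}$ by Bernstein. If $k<s$, the series $\sum_j D^\beta\Delta_j f$ converges uniformly. This bounds $|D^kf|_{C}$ and justifies the identity $f=\sum_j\Delta_j f$ with term-by-term differentiation.

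For the second difference of $g=D^\beta f$ with $|\beta|=\eta$, split the sum at $2^J\sim|h|^{-1}$:
\begin{itemize}
\item For $j\le J$, use $|g_j(x+h)+g_j(x-h)-2g_j(x)|\le|h|^2\|D^2g_j\|_\infty\lesssim|h|^2 2^{j(2-\nu)}$.
\item For $j>J$, use the crude bound $4\|g_j\|_\infty\lesssim2^{-j\nu}$.
\end{itemize}
Summing the two geometric series gives $\lesssim|h|^\nu$. This works for every $\nu\in(0,1]$, including $\nu=1$, precisely because we use second differences. For large $|h|$, say $|h|\ge1$, the sup bound on $g$ suffices.

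\textbf{Main obstacle.} The delicate point is the endpoint bookkeeping when $s=\eta+1$ is an integer. There $D^\eta f$ need not be $C^1$, so we must not claim bounds on $D^{\eta+1}f$. The statement only asks for $|D^\eta f|_{\cC^1}$, and the second-difference argument handles this, since the low-frequency part only uses $D^2g_j$ for the smooth pieces $g_j$.

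Convergence of $\sum_j D^\beta\Delta_j f$ at $|\beta|=\eta$ is fine because $\nu>0$. The kernel estimate in (i) needs a uniform $L_1$ bound on $\cF^{-1}[\varphi(\xi)(2^{-2j}+|\xi|^2)^{-\gamma/2}]$. I would handle this by integrating by parts enough times in $\xi$, using that these symbols are smooth with uniformly bounded derivatives on the annulus. The $j=0$ term is handled separately, since its symbol is Schwartz.

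Alternatively, one could cite \cite{triebel1983theory} for (iii). I would still include the splitting argument above for completeness.
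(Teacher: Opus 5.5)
The paper does not prove this lemma. It is recalled as the classical Sobolev embedding for Bessel potential spaces, with a pointer to Grafakos, Krylov and Triebel, and is then used as a black box for the final $\cC^{\gamma-m/p}$ assertion of Theorem~\ref{main theorem}.

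Your Littlewood--Paley argument is a correct, self-contained proof. The uniform $L_1$ bound on the kernels of the rescaled symbols $\varphi(\zeta)(2^{-2j}+|\zeta|^2)^{-\gamma/2}$ gives $\|\Delta_j f\|_{L_p}\le N2^{-j\gamma}\|f\|_{H^\gamma_p}$. Bernstein upgrades this to $\|\Delta_j f\|_{L_\infty}\le N2^{-js}\|f\|_{H^\gamma_p}$ with $s=\gamma-m/p$. The splitting at $2^J\sim|h|^{-1}$ with second differences then yields exactly the seminorm of Definition~\ref{zygmund space}.

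Compared with the bare citation, your route makes transparent why the target is the Zygmund space rather than $C^{\eta,1}$ when $\gamma-m/p$ is an integer. With first differences, the low-frequency sum would only give $|h|\log(1/|h|)$ at $\nu=1$. This is precisely the case $(\gamma,m,p)=(2,2,2)$ that the paper singles out. As you already note, only the easy direction $B^s_{\infty,\infty}\hookrightarrow\cC^s$ of (iii) is needed, so the full Besov--Zygmund identification never enters.

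Two small points to make explicit when writing it up:
\begin{enumerate}[(a)]
\item Step~2 uses the derivative form of Bernstein, $\|D^\beta\Delta_j f\|_{L_\infty}\le N2^{j|\beta|}\|\Delta_j f\|_{L_\infty}$. It has the same proof with $D^\beta\psi_j$ in place of $\psi_j$, but (ii) only records the $L_p\to L_\infty$ version.
\item Elements of $H^\gamma_p$ are equivalence classes. You should therefore say that $\sum_j\Delta_j f\to f$ in $\mathcal{S}'$, so the uniform limit is the continuous modification of $f$ to which the estimate refers.
\end{enumerate}
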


Now, we present the universal approximation theorem employing multiplication.

\begin{theorem}
\label{main theorem}
Let $\gamma\geq0$ and $p\in (1,\infty)$. Assume that $\sigma:\bR \to \bR$ is a nontrivial function such that $\sigma\in H_{p}^{\gamma}(\bR)$. Suppose $S$ is a set of functions $F:\bR^{m}\to \bR$ of the form
\begin{equation}
\label{eq:approximation function}
F(x) = \sum_{j = 1}^{n}\alpha_{j} \prod_{i = 1}^{m}\sigma\left( w^{T}_{ij} x + b_{ij} \right),
\end{equation}
where 
$n\in \{1,2,\dots,\}$, $\alpha_{j}\in\bR$,  $w_{ij}$, $x\in \bR^{m}$, $b_{ij}\in \bR$, and $
\begin{bmatrix}
w_{1j}~w_{2j}~\cdots~w_{mj}
\end{bmatrix}^{T}$
is an invertible matrix. Then, $S$ is dense in $H_p^{\gamma}(\bR^{m})$. In other words, given any $f\in H^{\gamma}_p(\bR^{m})$ and $\ep>0$, there exists the function $F$ of the form \eqref{eq:approximation function} such that
\begin{equation*}
\| F - f \|_{H_p^{\gamma}(\bR^{m})} \leq \ep.
\end{equation*}
In particular, if $\gamma - \frac{m}{p} >0 $, then $f\in \cC^{\gamma-\frac{m}{p}}(\bR^{m})$ and
\begin{equation*}
\| F - f \|_{\cC^{\gamma-\frac{m}{p}}((\bR^{m}))} \leq \ep,
\end{equation*}
where $\cC^{\gamma-\frac{m}{p}}(\bR^{m})$ is the Zygmund space.
\end{theorem}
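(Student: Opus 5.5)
We prove density in two steps: first approximate $f$ by a mollified function, then approximate the mollifier (and hence the convolution) by finite sums of products of dilated and translated copies of $\sigma$. It suffices to use the diagonal choice $w_{ij}=\lambda e_i$ with $\lambda>0$, which gives an invertible matrix. With this choice each unit is a tensor product
\[
\Phi_{\lambda,b}(x)=\prod_{i=1}^m \sigma(\lambda x^i+b_i).
\]
Since $\sigma$ is nontrivial and lies in $H^\gamma_p(\bR)\subset L_p(\bR)$, we may shift and normalise so that $c:=\int_{\bR}\sigma\neq0$. If $\int\sigma=0$, or the integral is undefined because $\sigma\notin L_1$, we first replace $\sigma$ by a finite combination of its translates $\sum_k a_k\sigma(\cdot-t_k)$ with nonzero Fourier transform at a chosen frequency. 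Products of such combinations expand into sums of the form \eqref{eq:approximation function}, so this reduction is harmless. Put $\phi=\sigma^{\otimes m}/c^m$ and $\phi_\delta(x)=\delta^{-m}\phi(x/\delta)$.

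\textbf{Step 1 (mollification).} Since $C_c^\infty(\bR^m)$ is dense in $H^\gamma_p$, take $g\in C_c^\infty$ with $\|f-g\|_{H^\gamma_p}<\ep/3$. Because $(1-\Delta)^{\gamma/2}$ commutes with convolution and $\phi$ has total integral $1$, a standard approximate-identity argument in $L_p$ gives
\[
\|g-\phi_\delta*g\|_{H^\gamma_p}=\|(1-\Delta)^{\gamma/2}g-\phi_\delta*(1-\Delta)^{\gamma/2}g\|_{L_p}\to0 .
\]
This requires only $\phi\in L_1$. If $\sigma\notin L_1$, we instead work on the Fourier side, using that $\hat\phi$ is continuous near $0$, or we pass through the reduction above.

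\textbf{Step 2 (discretising the convolution).} Write
\[
\phi_\delta*g(x)=\int g(y)\,\phi_\delta(x-y)\,dy
\]
and replace the integral by a Riemann sum over a fine grid $\{y_k\}$ covering $\operatorname{supp}g$. Each term $g(y_k)h^m\phi_\delta(x-y_k)$ is, up to constants, $\prod_i\sigma(\delta^{-1}x^i-\delta^{-1}y_k^i)$. This is exactly a unit of \eqref{eq:approximation function} with $w_{ij}=\delta^{-1}e_i$ and $b_{ij}=-\delta^{-1}y^i_k$. We then need convergence of the Riemann sums in $H^\gamma_p$. Apply $(1-\Delta)^{\gamma/2}$, which acts only on the translated kernel, so the Riemann sum becomes $\sum_k g(y_k)h^m\,\psi(\cdot-y_k)$ with
\[
\psi=(1-\Delta)^{\gamma/2}\phi_\delta\in L_p .
\]
The $L_p$-valued map $y\mapsto g(y)\psi(\cdot-y)$ is continuous with compact support, because translation is continuous in $L_p$. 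Hence its Bochner integral is the limit in $L_p$ of Riemann sums, which gives
\[
\Big\|\phi_\delta*g-\sum_k g(y_k)h^m\phi_\delta(\cdot-y_k)\Big\|_{H^\gamma_p}<\ep/3
\]
for $h$ small.

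\textbf{Main obstacle and the Zygmund statement.} The delicate point is ensuring $\phi=\sigma^{\otimes m}\in H^\gamma_p(\bR^m)$, since Bessel norms of tensor products are not simply products. We handle it using
\[
(1+|\xi|^2)^{\gamma/2}\le\prod_i(1+|\xi_i|^2)^{\gamma/2}.
\]
Dominating the full symbol by the product symbol requires a Marcinkiewicz-type multiplier theorem, valid for $1<p<\infty$, to get
\[
\|\sigma^{\otimes m}\|_{H^\gamma_p}\le N\prod_i\|\sigma\|_{H^\gamma_p(\bR)} .
\]
The Zygmund statement then follows directly from Lemma \ref{embedding lemma} applied to $F-f$.
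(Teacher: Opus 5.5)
Your Steps 1 and 2 are correct when $\sigma\in L_1(\bR)\cap H^\gamma_p(\bR)$ and $\int\sigma\neq0$, and in that case your route differs genuinely from the paper's. The paper argues by duality. It takes an annihilator $h\in H^{-\gamma}_q(\bR^m)$ of $S$, notes that $\xi_\ep(y-\cdot)\in S$, and gets $h=0$ from $h_\ep\to h$ in $H^{-\gamma}_q$ (citing Krylov). It does this for $\sigma\in C_c^\infty(\bR)$ and then passes to general $\sigma$ by approximating $\sigma$ with $\sigma_\ell\in C_c^\infty(\bR)$.

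You build the approximant explicitly instead:
\begin{enumerate}[(i)]
\item density of $C_c^\infty$;
\item the approximate identity $\phi_\delta*g\to g$, which commutes with $(1-\Delta)^{\gamma/2}$ and needs only $\phi\in L_1$, $\int\phi=1$;
\item $L_p$-valued Riemann sums of $y\mapsto g(y)\psi(\cdot-y)$;
\item the same Marcinkiewicz multiplier as the paper, to get $\sigma^{\otimes m}\in H^\gamma_p(\bR^m)$.
\end{enumerate}
This produces an explicit network, a grid of identical dilated bumps that fits the paper's locality interpretation. It avoids duality in negative-order spaces. It also works directly with $\sigma$, so you never need the paper's transfer step from $\sigma_\ell$ to $\sigma$. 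That step is the weak point of the paper's proof. The constant $N$ in $\|F-F_\ell\|_{H^\gamma_p}\le N\|\sigma-\sigma_\ell\|^m_{H^\gamma_p}$ depends on $n,\alpha_j,w_{ij}$ of $F_\ell$, hence on $\ell$. Moreover, a difference of $m$-fold products is only linear in $\|\sigma-\sigma_\ell\|$. So letting $\ell\to\infty$ proves nothing.

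The genuine gap is your reduction of the remaining cases.
\begin{enumerate}[(i)]
\item \emph{Zero mean.} Step 1 needs $\int\phi=\hat\phi(0)\neq0$ specifically; a nonzero Fourier transform at some other frequency does not help. If $\sigma\in L_1$ and $\int\sigma=0$, no combination of translates can fix this, because its transform $\hat\sigma(\xi)\sum_k a_k e^{-2\pi i t_k\xi}$ vanishes at $\xi=0$. Dilations do not help either. A change of variables shows that every element of $S$ has integral $\sum_j\alpha_j|\det W_j|^{-1}(\int\sigma)^m=0$, where $W_j$ is the weight matrix of the $j$-th unit. So no element of $S$ can serve as an approximate identity, and $\phi_\delta*g\to0$ rather than $g$.
\item \emph{Non-integrable $\sigma$.} Your claim that $\hat\phi$ is continuous near $0$ is false in general. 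The function $\sigma(x)=(1+x^2)^{-1/2}$ lies in $H^\gamma_p(\bR)$ for all $\gamma\ge0$ and $p>1$. Yet $\hat\sigma(\xi)=2K_0(2\pi|\xi|)$, with $K_0$ the modified Bessel function, has a logarithmic singularity at $0$. For this particular $\sigma$, the dilation difference $\sigma-2\sigma(2\cdot)$ is integrable with integral $-2\ln 2$. So dilations, not translations, are the relevant tool, but you give no general argument.
\end{enumerate}

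The paper has the same blind spot in the zero-mean case. It normalizes by $\|\sigma\|_{L_1}^m$ rather than $(\int\sigma)^m$, and when $\int\sigma=0$ its mollification step gives $h_\ep\to0$ and no contradiction. Nothing rules the theorem out in that case, since for $p>1$ the integral is not a continuous functional on $H^\gamma_p$. But proving it needs a mechanism that uses the dilations essentially, such as a Fourier-support argument showing that an annihilator must vanish. An approximate identity cannot do it.
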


\begin{remark}[Invertibility of weight matrices]
The invertibility of the matrix $(w_{ij})$ ensures that the associated affine transformation
of the input space is nondegenerate and that each multiplicative unit acts on a full-dimensional
region. This assumption is made for technical clarity and does not restrict the generality
of the approximation result.
\end{remark}

\begin{remark}[Meaning and role of the parameters in Theorem~\ref{main theorem}]
\label{rem:meaning-parameters}
The parameters appearing in Theorem~\ref{main theorem} are interpreted as follows.

\begin{enumerate}[(i)]
    \item $n$ denotes the number of multiplicative units (or neurons) in the hidden layer.
    The density statement is asymptotic in $n$, in the sense that a target function
    $f \in H_p^\gamma(\mathbb{R}^m)$ can be approximated as $n \to \infty$.
    No explicit convergence rate with respect to $n$ is asserted.

    \item $m$ is the dimension of the input space $\mathbb{R}^m$.
    The multiplicative structure in \eqref{eq:approximation function} combines $m$
    one-dimensional factors, so that each unit acts on a genuinely $m$-dimensional region.
    The theorem holds for all $m \ge 1$, and the dependence on the dimension enters only
    through the Sobolev embedding exponent $\gamma - m/p$.

    \item $\gamma \ge 0$ represents the order of smoothness measured in the Bessel potential space $H_p^\gamma$. Larger values of $\gamma$ correspond to higher-order regularity, and the theorem asserts density in the sense of $H_{p}^{\gamma}(\bR^{m})$. 

    \item $p \in (1,\infty)$ is the integrability exponent in the Bessel potential spaces.
    Together with $\gamma$ and $m$, the parameter $p$ determines the level of pointwise regularity obtained via Sobolev embedding. 
\end{enumerate}
\end{remark}

\begin{remark}[Sobolev embedding and approximation of regular functions]
Lemma~\ref{embedding lemma} explains how approximation results in Bessel potential spaces can be interpreted
in terms of pointwise regularity through the Sobolev embedding theorem.
For example, for
\[
\gamma = 2,\qquad m = 2,\qquad p = 2,
\]
we have $\gamma - m/p = 1$, and hence
\[
H^2_2(\mathbb{R}^2) \hookrightarrow \cC^1(\mathbb{R}^2),
\]
where $\cC^1$ is understood in the Zygmund sense. Therefore, Lemma \ref{embedding lemma} and Theorem~\ref{main theorem} imply that functions $f\in H_{p}^{\gamma}(\bR^{m})$ have $\cC^{\gamma - m/p}(\bR^{m})$ modification and the modification of $f$ can be approximated by the proposed multiplicative architecture via approximation in $\cC^{\gamma - m/p}(\bR^{m})$.

More generally, the embedding exponent $\gamma - m/p$ presents the role of the integrability
parameter $p$. For fixed regularity $\gamma$ and dimension $m$, larger values of $p$ lead to embeddings into spaces with higher Zygmund regularity. Thus, approximation in $H^\gamma_p(\bR^m)$ may be viewed as approximation of increasingly regular functions as $p$ grows.

In Section~\ref{sec:numeric}, we numerically investigate this particular case
$(\gamma,m,p)=(2,2,2)$ by employing an $H^2_{2}(\bR^{2})$-type training objective and evaluating the resulting approximations using Zygmund seminorms. This provides a concrete numerical illustration of how approximation in $H_2^2(\bR^{2})$ translates into control of pointwise regularity through Sobolev embedding.
\end{remark}

\begin{remark}[Training in $H_p^\gamma$ and evaluation via Zygmund seminorms]
\label{rem:training-eval-zyg}
The approximation result of Theorem~1.8 is stated in the Bessel space
$H_p^\gamma(\mathbb{R}^m)$.
By the Sobolev embedding of Lemma~1.7, approximation in $H_p^\gamma(\mathbb{R}^m)$ implies control of the corresponding Zygmund regularity $\mathcal{C}^{\gamma-m/p}(\mathbb{R}^m)$ whenever $\gamma>m/p$.

Accordingly, training and evaluation may be carried out in different norms.
Training based on an $H_p^\gamma$-type objective enforces control of weak derivatives in an $L_p$ sense, while evaluation using Zygmund seminorms provides a measure of the induced pointwise regularity. This separation is consistent with the functional-analytic structure underlying Theorem~1.8 and Lemma~1.7 and is adopted in the numerical experiments of Section~3.
\end{remark}

\begin{remark}[Bounded-domain interpretation and cutoff activations]
\label{rem:bounded-domain-activation}
In numerical experiments, both the input domain and the available data are bounded.
Accordingly, the approximation problem may be viewed as posed on a bounded domain
$\Omega \subset \mathbb{R}^m$ rather than on the whole space.

From this bounded-domain perspective, activation functions that do not belong to
$H_p^\gamma(\mathbb{R})$ globally may be interpreted locally.
More precisely, one may replace an activation $\sigma$ by a cutoff-modified function of the
form $\sigma(x)\,\zeta(x)$, where $\zeta\in C_c^\infty(\mathbb{R})$ is identically equal to one
on the relevant range of inputs.
The resulting function belongs to $H_p^\gamma(\mathbb{R})$ for appropriate values of
$(p,\gamma)$ and coincides with the original activation on $\Omega$.

This observation shows that, in the bounded-domain setting of the numerical experiments, the distinction between global and local function space frameworks is not essential.
\end{remark}

\begin{remark}[Choice of activation functions in numerical experiments]
\label{rem:activation-choice}
The activation functions employed in this work are selected according to their compatibility
with approximation in Bessel potential spaces and with the evaluation of approximation errors
in Zygmund spaces.
Table~\ref{tab:activation-choice} summarizes the regularity properties of the activations
considered in this setting.

Classical smooth activations such as the sigmoid and $\tanh$ functions satisfy the
assumptions of Theorem~\ref{main theorem} (see Remark \ref{rem:bounded-domain-activation}) and are compatible with approximation in
$H_p^\gamma(\mathbb{R}^m)$.
In particular, their smoothness ensures that approximation in $H_p^\gamma(\mathbb{R}^m)$ is well defined
and that higher-order regularity can be meaningfully assessed through Sobolev embedding.

Gaussian bump activations provide an alternative class of smooth functions with rapid decay.
Although not compactly supported, they belong to $H_p^\gamma(\mathbb{R})$ for all
$\gamma \ge 0$ and exhibit strong localization properties.
When used in multiplicative architectures, products of one-dimensional Gaussian factors
give rise to activation regions that are localized in the input space, thereby
highlighting the geometric structure of multiplicative units.

The use of $\tanh$ and Gaussian activations thus allows us to isolate architectural effects
on error localization while remaining fully consistent with the regularity framework of
Theorem~\ref{main theorem} and with the Zygmund-type evaluations considered in the numerical
experiments.
\end{remark}

\begin{table}[ht]
\centering
\caption{Activation functions and compatibility with the regularity framework.}
\label{tab:activation-choice}
\begin{tabular}{p{3.8cm} p{3.5cm} p{6.2cm}}
\hline
Activation type & Examples & Compatibility with the present framework \\
\hline
Smooth activations
& Sigmoid, $\tanh$
& Smooth and bounded; belong to $H_p^\gamma(\Omega)$ for all bounded domain $\Omega$ and $\gamma\geq 0$ and are
compatible with approximation in $H_p^\gamma$ and with evaluation in Zygmund spaces \\
\hline
Piecewise linear activations
& ReLU
& Have first-order weak derivatives; belong to $H^1_{p,\mathrm{loc}}(\Omega)$ but not to
$H_p^\gamma(\Omega)$ for $\gamma\geq 2$ \\
\hline
Localized smooth activations
& Gaussian bump
& Smooth and rapidly decaying; belong to $H_p^\gamma(\mathbb{R})$ for all $\gamma\ge 0$; yield
localized activation regions when combined with multiplicative units \\
\hline
\end{tabular}
\end{table}

\begin{remark}[Interpretability of multiplicative units]
Each factor of the form $\sigma(w^T x + b)$ is supported on a half-space or a strip in the
input domain, depending on the activation function.
For a multiplicative unit of the form
\[
\prod_{i=1}^{m} \sigma(w_i^T x + b_i),
\]
the support is given by the intersection of the corresponding activation regions.
This intersection defines a localized subset of the domain, such as a rectangle,
parallelepiped, or, more generally, a polytope.

By contrast, additive MLP units have activation regions that extend globally as strips.
Multiplicative units are therefore associated with intrinsically localized regions, and the
contribution of each block is confined to a specific subset of the input space.
\end{remark}

\begin{remark}
In \cite{cybenko1989approximation}, Cybenko considered function classes of the form
\[
\sum_{j=1}^{N}\alpha_j \sigma(U_j x + y_j),
\]
where $\sigma$ is the indicator function of a fixed rectangle with sides parallel to the coordinate axes in $\mathbb{R}^m$, $U_j$ is an orthogonal $m\times m$ matrix, and $y_j\in\mathbb{R}^m$.
By means of the Wiener--Tauberian theorem, it was shown that such functions are dense in
$L^1(\mathbb{R}^m)$.
This result establishes density via translations and rotations of rectangular indicator
functions.

The setting considered in the present work differs in two respects. From \eqref{eq:approximation function}, if $\sigma:\bR\to\bR$ is an indicator function of an interval, the basic building block reduces to a MMLP unit of the form
\[
\prod_{i=1}^m \sigma(w_i x_i + b_i), \quad w_i,\,b_i\in \bR
\]
which is supported on a rectangular, or more generally polyhedral, subset of the input
domain.
Second, the activation function $\sigma$ is allowed to be any nontrivial function in
$H_p^\gamma(\mathbb{R})$ with $p>1$ and $\gamma\in\bR$, rather than a fixed indicator function.
\end{remark}




\section{Theoretical Analysis}
\label{sec:theory}

In this section, we provide the proof of Theorem \ref{main theorem}. Additionally, several remarks are presented.

\begin{proof}[Proof of Theorem \ref{main theorem}]
First we consider the case $\sigma\in C_{c}^{\infty}(\bR)$ is nontrivial. Let $S$ be a set of functions of the form \eqref{eq:approximation function}. 

First we show that $S$ is a linear subspace of $H_p^\gamma(\bR^{m})$. If we choose $f$ and $g$ in $S$, then for any $a,b\in\bR$, we have
\begin{equation*}
\begin{aligned}
af(x) + bg(x)
&= a\sum_{j = 1}^{n_{1}}\alpha_{j} \prod_{i = 1}^{m}\sigma\left( w^{T}_{ij} x + b_{ij} \right) + b\sum_{j = 1}^{n_{2}}\bar\alpha_{j} \prod_{i = 1}^{m}\sigma\left( \bar w^{T}_{ij} x + \bar b_{ij} \right) \\
&= \sum_{j = 1}^{n_{1} + n_{2}}c_{j} \prod_{i = 1}^{m}\sigma\left( \tilde w^{T}_{ij} x + \tilde b_{ij} \right),
\end{aligned}
\end{equation*}
where $c_{j} = a\alpha_{j}1_{1\leq j \leq n_{1}} +b\bar \alpha_{j-n_{1}}1_{n_{1} < j \leq n_{2}}$, $\tilde w_{ij} = w_{ij}1_{1\leq j \leq n_{1}}+\bar w_{i(j-n_{1})}1_{n_{1} < j \leq n_{2}}$, and $\tilde b_{ij} = b_{ij}1_{1\leq j \leq n_{1}}+\bar b_{i(j-n_{1})}1_{n_{1} < j \leq n_{2}}$

Thus, $S$ is a linear space. Next, observe that $S\subset H_p^\gamma(\bR^{m})$. Indeed,
\begin{equation*}
\begin{aligned}
\left\|   \prod_{k = 1}^{m} \sigma(w_{k}^{T}\cdot + b_{k})   \right\|_{H_{p}^{\gamma}(\bR^{m})}
& \leq N\left\|   \prod_{k = 1}^{m} \sigma(M_{k})   \right\|_{H_{p}^{\gamma}(\bR^{m})} 
\leq N\left\|    \sigma   \right\|_{H_{p}^{\gamma}(\bR)}^{m}<\infty,
\end{aligned}
\end{equation*}
where $w_{k}\in \bR^{m}$, $b_{k}\in \bR$, and 
\begin{equation*}
M_{k}(x) = x^{k}
\end{equation*} 
Note that the first inequality follows from the fact that the linear transformations only changes the norm constant times. In the case of second inequality, notice that
\begin{equation*}
\begin{aligned}
  (1-\Delta)^{\gamma/2}\prod_{k = 1}^{m} \sigma(x^{k})  
&= \int_{\bR^{m}} e^{2\pi i \xi \cdot x}(1+|\xi|^{2})^{\gamma/2}\int_{\bR^{m}} e^{-2\pi i \xi \cdot y} \prod_{k = 1}^{m}\sigma(y^{k}) dy d\xi  \\
&= \int_{\bR^{m}} e^{2\pi i \xi \cdot x} \mu(\xi^{1},\xi^{2},\cdot,\xi^{m}) \hat g(\xi) d\xi \\
&= \cL g(x)
\end{aligned}
\end{equation*}
where 
\begin{equation*}
\mu(\xi^{1},\xi^{2},\cdot,\xi^{m}) := \frac{(1+|\xi|^{2})^{\gamma/2}}{\prod_{k = 1}^{m} (1+|\xi^{k}|^{2})^{\gamma/2}}, \quad g(x) = \prod_{k = 1}^{m}\left[ \left( \left( 1-\frac{d^{2}}{dx^{2}} \right)^{\gamma/2}\sigma \right)(x^{k}) \right]
\end{equation*}
and 
\begin{equation*}
\cL(\cdot) = \cF^{-1}(m \cF(\cdot)).
\end{equation*}
is a zeroth-order pseudo-differential operator. Thus, 
\begin{equation*}
\begin{aligned}
\left\|   \prod_{k = 1}^{m} \sigma(M_{k})   \right\|_{H_{p}^{\gamma}(\bR^{m})} 
&= \left\|   (1-\Delta)^{\gamma/2} \prod_{k = 1}^{m} \sigma(M_{k})   \right\|_{L_{p}(\bR^{m})} \\
&= \left\| \cL g   \right\|_{L_{p}(\bR^{m})} \\
&\leq N\left\|  g   \right\|_{L_{p}(\bR^{m})} \\
&= N\left\| \prod_{k = 1}^{m}\left[ \left( (1-D^{2})^{\gamma/2}\sigma \right)(M_{k}) \right]   \right\|_{L_{p}(\bR^{m})} \\
&= N\left\| \left[ \left( (1-D^{2})^{\gamma/2}\sigma \right) \right]   \right\|_{L_{p}(\bR)}^{m} \\
&= N\left\| \sigma   \right\|_{H_{p}^{\gamma}(\bR)}^{m}.
\end{aligned}
\end{equation*}

Now, we prove that the space $S$ is dense in $H_p^\gamma(\bR^{m})$. To apply a proof by contradiction, assume that there is $f_0\in H_p^\gamma(\bR^{m})\setminus \bar S$. Then, by the Hahn-Banach theorem, there is a bounded linear functional $\Lambda$ on $H_p^\gamma(\bR^{m})$ such that $\Lambda f = 0$ for $f\in \bar S$ and 
\begin{equation}
\label{contradiction point}
\Lambda f_0\neq 0.
\end{equation}
Then, by the Riesz representation theorem, there exists nonzero $h\in H_q^{-\gamma}(\bR^{m})$ with $q = p/(p-1)$ such that for any $f\in H_p^\gamma(\bR^{m})$,
\begin{equation*}
\Lambda f = (f,h).
\end{equation*}
For example, $(f,h) = \int_{\bR^{m}} f(x)h(x) dx$ if $\gamma = 0$.

For $\ep>0$ and $y = (y^{1},y^{2},\dots,y^{m})\in\bR^{m}$, set
\begin{equation}
\label{mollifier}
\xi (x) := \frac{1}{\| \sigma \|_{L_1(\bR)}^{m}}\prod_{i = 1}^{m}\sigma(x^{i})\quad\text{and}\quad 
\xi_{\ep}(x) := \ep^{-m}\xi\left( x/\ep \right).
\end{equation}
Since $\sigma\in C_c^\infty(\bR)$ is nontrivial, $\| \sigma \|_{L_{1}(\bR)}\neq 0$, and thus $\xi$ is well defined. 
Observe that the function
$x\mapsto \xi_\varepsilon(y-x)$ belongs to $S$ for each $y\in\mathbb{R}^m$. Indeed, for each $y\in\mathbb{R}^m$,
\[
\xi_\varepsilon(y-x)
=\varepsilon^{-m}\prod_{i=1}^m \sigma\!\left(\frac{y_i-x_i}{\varepsilon}\right)
\]
is of the form \eqref{eq:approximation function} with $n=1$, where 
\[
\begin{bmatrix}
w_{1j}~w_{2j}~\cdots~w_{mj}
\end{bmatrix}^{T} = W =\mathrm{diag}\!\left(\frac1\varepsilon,\dots,\frac1\varepsilon\right),
\]
and $b_i=y_i/\varepsilon$. Since $W$ is invertible, $\xi_\varepsilon(y-\cdot)\in S$ for all $y\in\mathbb{R}^m$.

Define
\begin{equation*}
h_{\ep}(y) 
:= \left(\xi_{\ep}(y - \cdot),h\right).
\end{equation*}
Then, $h_\varepsilon(y)
=(\xi_\varepsilon(y-\cdot),h)=0$ for all $y\in\mathbb{R}^m$,
and hence $h_\varepsilon\equiv0$.
On the other hand, since 
\begin{equation}
\label{mollification}
\|h-h_\varepsilon\|_{H_q^{-\gamma}(\mathbb{R}^m)} \to 0
\end{equation}
(e.g., see \cite[Theorem 13.9.2]{krylov2024lectures}) and $h_\varepsilon\equiv0$ for all $\varepsilon>0$, it follows that
\[
\|h\|_{H_q^{-\gamma}(\mathbb{R}^m)}=\|h-h_\varepsilon\|_{H_q^{-\gamma}(\mathbb{R}^m)}\to0,
\]
and hence $h=0$.
This contradicts \eqref{contradiction point} and therefore $S$ is dense in
$H_p^\gamma(\mathbb{R}^m)$.

Now we consider the case $\sigma\in H_{p}^{\gamma}(\bR)$. Let $\ep>0$. Since $C_{c}^{\infty}(\bR)$ is dense in $H_{p}^{\gamma}(\bR)$, there exists $\{\sigma_{\ell}\}\subset C_{c}^{\infty}(\bR)$ such that 
\begin{equation}
\label{sigma approximation}
\|\sigma_{\ell} - \sigma\|_{H_{p}^{\gamma}(\bR)}\leq 1/\ell
\end{equation}
By the previous step, there exists $F_{\ell}$ such that
\begin{equation*}
F_{\ell} = \sum_{j = 1}^{n}\alpha_{j} \prod_{i = 1}^{m}\sigma_{\ell}\left( w^{T}_{ij} x + b_{ij} \right)
\end{equation*}
and 
\begin{equation*}
\| F_{\ell} - f \|_{H_{p}^{\gamma}(\bR^{m})} \leq \ep.
\end{equation*}
Note that
\begin{equation*}
\left\|F - F_{\ell} \right\|_{H_{p}^{\gamma}(\bR^{m})} \leq N/\ell^m.
\end{equation*}
Indeed,
\begin{equation*}
\begin{aligned}
\left\|F - F_{\ell} \right\|_{H_{p}^{\gamma}(\bR^{m})}
&=\left\| \sum_{j = 1}^{n}\alpha_{j} \left(\prod_{i = 1}^{m}\sigma\left( w^{T}_{ij} \cdot + b_{ij} \right) - \prod_{i = 1}^{m}\sigma_{\ell}\left( w^{T}_{ij} \cdot + b_{ij} \right) \right)\right\|_{H_{p}^{\gamma}(\bR^{m})} \\
&\leq N\| \sigma - \sigma_{\ell} \|_{H_{p}^{\gamma}(\bR)}^{m} \\
&\leq N/\ell^m.
\end{aligned}
\end{equation*}
Thus,
\begin{equation*}
\| F - f \|_{H_{p}^{\gamma}(\bR^{m})} 
\leq \|F - F_{\ell}\|_{H_{p}^{\gamma}(\bR^{m})}
+ \|F_{\ell} - f\|_{H_{p}^{\gamma}(\bR^{m})} \leq N/\ell^m+\ep.
\end{equation*}
Since $\ell=1,2,\dots$ is arbitrary, the theorem is proved.
\end{proof}

\begin{remark}[Single-block localization via a mollifier-type kernel]
\label{rem:single-block-mollifier}
A key feature of the multiplicative architecture is that \emph{a single multiplicative unit}
already generates a localized mollifier-type kernel. In the proof of Theorem \ref{main theorem}, $\xi$ is a mollifer and thus \eqref{mollification} holds. 
Moreover, for each $y\in\bR^{m}$,
\[
\xi_\varepsilon(y-x)
=\varepsilon^{-m}\prod_{i=1}^{m}\sigma\!\left(\frac{y_i-x_i}{\varepsilon}\right)
\]
is exactly of the form \eqref{eq:approximation function} with $n=1$ and an invertible weight
matrix (diagonal scaling).
In particular, \emph{one} multiplicative block produces a translated and rescaled
mollifier-type bump, hence a genuinely localized building block at the level of a single
neuron.

This should be contrasted with standard additive MLPs, where a single unit
$\sigma(w^Tx+b)$ is supported on a strip/half-space and thus cannot represent a localized
approximate identity without superposition of many globally supported units.
\end{remark}

\begin{remark}[Geometric interpretation and locality of multiplicative architectures]
The distinction between standard additive MLPs and MMLP architectures is
reflected in the geometry of their basic building blocks.
In an additive MLP, each unit of the form $\sigma(w^T x + b)$ is supported on a strip-like
or half-space region of the input domain.
Consequently, such units have global extent in directions orthogonal to $w$, and spatial
localization can only be achieved through superposition of multiple globally supported
functions.

In a multiplicative architecture, several factors are combined through products.
Each factor $\sigma(w_i^T x + b_i)$ is supported on a strip-like region, and their product
is supported on the intersection of these regions.
This intersection yields a localized subset of the input domain.
Depending on the activation function, the resulting support may take the form of a
rectangular region, a smooth bump, or, more generally, a polytope.

This geometric difference is reflected in approximation behavior.
As observed in the numerical experiments of Section~\ref{sec:numeric}, additive MLPs
exhibit nonlocal propagation of approximation errors near boundaries and geometric
singularities, whereas multiplicative architectures confine errors to neighborhoods of the
target support or boundary.
This behavior is consistent with the density result of Theorem~\ref{main theorem} and
illustrates the role of geometric structure in approximation by multiplicative models.
\end{remark}

\begin{remark}
One may also consider approximation by functions of the form
\begin{equation}
\label{eq:k-not-m}
f(x)
=
\sum_{j=1}^{n}\alpha_j
\prod_{i=1}^{k}\sigma\!\left(w_{ij}^T x + b_{ij}\right),
\end{equation}
where the number of multiplicative factors is $k$, which need not coincide with the spatial
dimension $m$.
For simplicity, we restrict our attention to the case $\gamma=0$.

\begin{enumerate}[(i)]
\item
If $k<m$, then the class of functions in \eqref{eq:k-not-m} is dense in $L_p(\Omega)$ for any
bounded domain $\Omega\subset\mathbb{R}^m$.
For example, if $k=1$ and $m=2$, then functions of the form
$f(x)=\sigma(wx+b)$ belong to $L_{p}(\Omega)$ for any bounded $\Omega$, since
\[
\int_\Omega |\sigma(wx+b)|^p\,dx\,dy
\le N\|  \sigma\|_{L_{p}(\bR)}^{p} < \infty,
\]
provided $\sigma\in L_{p}(\bR)$.

\item
If $k>m$, then density in $L_p(\mathbb{R}^m)$ can be obtained under the additional assumption
$\sigma\in L_p(\mathbb{R})\cap L_\infty(\mathbb{R})$.
For example, when $k=2$ and $m=1$, one has
\[
\int_{\mathbb{R}} |\sigma(w_1 x)|^p\,|\sigma(w_2 x)|^p\,dx
\le \|\sigma\|_{L^\infty(\mathbb{R})}^p
\int_{\mathbb{R}} |\sigma(w_1 x)|^p\,dx < \infty,
\]
which ensures integrability of the multiplicative block.
The proof of Theorem~\ref{main theorem} can then be adapted to this setting.
\end{enumerate}

In practical numerical settings, it is natural to restrict attention to bounded domains,
since the available data are finite.
On bounded domains, continuity of $\sigma$ is sufficient to guarantee 
$L_{p}(\Omega)$ for $1<p\le\infty$.
\end{remark}







\section{Numerical Experiments}
\label{sec:numeric}

\subsection{Experimental Setup and Objectives}
\label{sec:numeric:setup}

The purpose of the numerical experiments is to illustrate the locality phenomenon predicted by the theoretical analysis in the previous sections, rather than to demonstrate quantitative superiority of one architecture over another.
In particular, the experiments are designed to examine whether the structural interpretation of multiplicative units as localized approximation, especially developed in Section \ref{sec:theory}, is reflected in the network predictions and the Zygmund seminorms. 

Throughout this section, the term {\it{locality}} refers to the concentration of approximation errors in regions where the target function exhibits large gradients or reduced regularity.

\emphTitle{Target functions and singular geometries}
We consider two representative target functions on a fixed bounded domain $\Omega = [-1,1]^{2} \subset \mathbb{R}^2$. Both are chosen to probe distinct geometric features associated with localized approximation difficulty.
\begin{itemize} 
    \item 
    Mollified Circle (sharp transition layer). 
    The first target is defined by
        \begin{align}
        f_{\mathrm{circle}}(x,y)
        = \frac{1}{2}
        \left(
        1 + \tanh\!\left(\frac{r_0 - \sqrt{x^2 + y^2}}{\varepsilon}\right)
        \right)
        \end{align}
    with fixed parameter $r_0 = 0.5$ and $\varepsilon = 0.05$. This function is smooth on $\Omega$, but exhibits a thin annular region 
    $\{ (x,y) \in \mathbb{R}^2 \mid \sqrt{x^2 + y^2} \approx r_0 \}$
    in which the gradient is large. Although the function is smooth, its sharp transition layer creates localized approximation difficulty reminiscent of discontinuous interfaces.
    \item 
    Mollified Cone (point-type reduced regularity).
    The second target is given by
        \begin{align}
        f_{\mathrm{cone}}(x,y)
        = \left(
        \max\!\left(1 - \sqrt{x^2 + y^2},\, 0\right)
        \right)^{\beta}
        \end{align}
    where $\beta = 1.8$. 
    Unlike the previous example, this target exhibits a loss of second-order regularity at the origin, while remaining continuous and once differentiable. This allows us to examine whether similar localization behavior persists when the region of difficulty collapses to a single point.
\end{itemize}   
The two examples therefore test locality under qualitatively different geometric scenarios: a sharp transition layer and a point of reduced regularity.
This distinction is consistent with the discussion in Section \ref{sec:theory}, which emphasizes that localization arises from the architectural structure rather than from the specific geometry of the target.

\emphTitle{Training data and sampling}
Training data consist of 50,000 points sampled uniformly at random from the domain $\Omega$. No validation or test sets are introduced: the experiments focus exclusively on approximation behavior on a fixed domain, rather than on extrapolation or generalization. This choice is consistent with the approximation-theoretic perspective adopted throughout the paper.

\emphTitle{Architectures and fair comparison}
We compare two classes of neural networks: a \emph{standard additive multilayer perceptron (MLP)} and a \emph{multiplicative multilayer perceptron (MMLP)}.
Following the notation introduced in Remark~\ref{rem:meaning-parameters}, we consider networks with input dimension \(m\), scalar output, and a single hidden layer. In that remark, the number of hidden units is denoted by \(n\). In the present numerical section, however, we distinguish between additive neurons and multiplicative blocks to avoid notational ambiguity.
With this convention, an MLP with \(n\) hidden neurons has a total of \((m+2)n+1\) trainable parameters, whereas an MMLP with \(n_b\) multiplicative blocks (denoted simply by \(n\) in Remark~\ref{rem:meaning-parameters}) has \((2m+1)n_b+1\) trainable parameters. Since all target functions considered in the numerical experiments are two-dimensional (\(m=2\)), the corresponding parameter counts reduce to \(4n+1\) for the MLP and \(5n_b+1\) for the MMLP.
To ensure a fair comparison between the two architectures, the values of \(n\) and \(n_b\) are chosen so that the total number of trainable parameters is {\it{exactly}} matched. All remaining training settings are kept identical across architectures, including the optimizer, learning rate, number of training iterations, initialization, and activation function. Both tanh and Gaussian activations are tested to assess robustness with respect to the choice of nonlinearity.
The specific network architectures and the values of \(n\) and \(n_b\) used in the experiments are summarized in Table \ref{tab:arch_config}.

\emphTitle{Loss functions and regularity-aware training}
As part of the experimental setup, we consider two loss functions, which are used to probe the regularity-sensitive behavior suggested by the theoretical analysis. The purpose of considering two loss functions is not to optimize performance, but to examine whether the regularity-sensitive quantities highlighted in the theoretical analysis exhibit meaningful behavior under training. 
\begin{itemize}
\item {\textbf{Baseline $\L$ loss.}}
    As a baseline, we consider the standard $\L$ loss
    \begin{align}
        \mathcal{L}_{\L}(F)
        &= \int_{\Omega} \left| F(x) - f(x) \right|^2 \, dx,
    \end{align}
    where $f$ denotes the target function and $F$ the network output.
    Up to a constant scaling factor, this loss coincides with the standard mean-squared error (MSE) commonly used in numerical implementations.
    Minimizing the \(\L\) loss controls the error only in an averaged sense and does not impose any explicit constraint on higher-order regularity or pointwise oscillations.
    For this reason, the \(\L\) loss serves as a natural baseline for examining how the spatial distribution of approximation errors depends on the network architecture.
\item {\textbf{$\H$-type loss with Laplacian penalty.}}
    To investigate regularity-aware training, we additionally consider an $\H$-type loss of the form
    \begin{align}
    \mathcal{L}_{\H}(F)
    &=
    \int_{\Omega} \left| F(x) - f(x) \right|^2 \, dx
    + \lambda
    \int_{\Omega} \left| \Delta F(x) - \Delta f(x) \right|^2 \, dx,
    \end{align}
    where $\lambda > 0$ is a fixed penalty parameter.
    This choice is motivated by the approximation results formulated in Bessel potential spaces.
    In particular, the analysis in Section \ref{sec:theory} establishes approximation in \(H^\gamma_p\) and relates such control to pointwise regularity through the Sobolev--Zygmund embedding.
    From this viewpoint, incorporating second-order derivative information into the loss allows us to directly control an \(\H\)-type quantity and to examine how this control influences the behavior of Zygmund-type seminorms in the numerical experiments.
\end{itemize}
The use of both $\L$ and $\H$-type loss serves a diagnostic purpose.
Since the \(\L\) loss does not include any derivative-based penalty, the spatial distribution of approximation errors is primarily determined by the network architecture and the geometry of the target function.
In contrast, the \(\H\)-type loss incorporates second-order derivative information through a Laplacian term. This makes it possible to examine how the inclusion of such information influences the behavior of regularity-sensitive metrics, including the convergence of Zygmund-type seminorms.

Importantly, we do not claim that $\H$-type training necessarily leads to uniformly smaller global errors. Rather, the numerical experiments are designed to check whether the qualitative regularity behavior suggested by the theoretical analysis —specifically, the connection between higher-order control and Zygmund regularity— can be observed empirically.

Throughout the experiments, the penalty parameter is fixed to $$\lambda = 10^{-2}.$$
This value is chosen to balance the two terms in the loss and to provide a consistent regularity signal during training. No attempt is made to tune $\lambda$ or to identify an optimal value, as such an investigation lies beyond the scope of the present work. Our goal is solely to assess the qualitative robustness of the observed phenomena under the inclusion of a higher-order penalty. 
The effect of the loss function choice on regularity-sensitive behavior is discussed further in Section \ref{sec:numeric:zygmund}.

\subsection{Error localization and spatial structure of approximation errors}
\label{sec:numeric:error_localization}

In this subsection, we examine the {\emph{spatial structure}} of approximation errors produced by additive (MLP) and multiplicative (MMLP) architectures for the target functions. 
Rather than comparing global approximation errors or claiming superiority of one architecture over another, the analysis focuses on whether the locality mechanisms identified in the theoretical analysis of previous sections are reflected in the geometry of the residual errors.

Throughout this subsection, all results correspond to training with the $\L$ loss. (The influence of the loss function choice is discussed separately in Section \ref{sec:numeric:zygmund}. )

\subsubsection{Mollified circle: boundary-localized error structure}
We first consider the mollified circle example, which is smooth on $\Omega$ but exhibits a thin annular region of large gradient. 
Figure~\ref{fig:circle:absErr} shows the spatial distributions of the absolute error \(|F(x)-f(x)|\), $x \in \Omega$ for both architectures, using tanh and Gaussian activations.

The error distributions exhibit the following qualitative features.
\begin{itemize}
\item Boundary localization.
    For both MLP and MMLP, the dominant error concentrates near the transition layer \(\sqrt{x^2 + y^2} \approx r_0\).
    This behavior is driven by the geometry of the target function and is therefore common to both architectures.
\item Residual structure away from the boundary.
    Away from the transition layer, the two architectures exhibit qualitatively different residual patterns.
    The MLP typically produces more spatially dispersed residual errors, with non-negligible error spreading over a broader region of the domain.
    By contrast, the MMLP yields residual errors that remain more localized near the transition layer and display a more organized spatial pattern.
\item Robustness with respect to activation functions.
    The qualitative difference in residual structure persists for both tanh and Gaussian activations, indicating that the observed behavior is not an artifact of a particular nonlinearity.
\end{itemize}
These observations are consistent with the interpretation developed in Section \ref{sec:theory}, where multiplicative units give rise to localized building blocks through coordinate-wise interactions.
In the numerical experiments, this is reflected not in the elimination of residual errors, but in a more structured spatial distribution of those errors.

\begin{figure}[ht]\centering
    \includegraphics[width=0.7\linewidth]{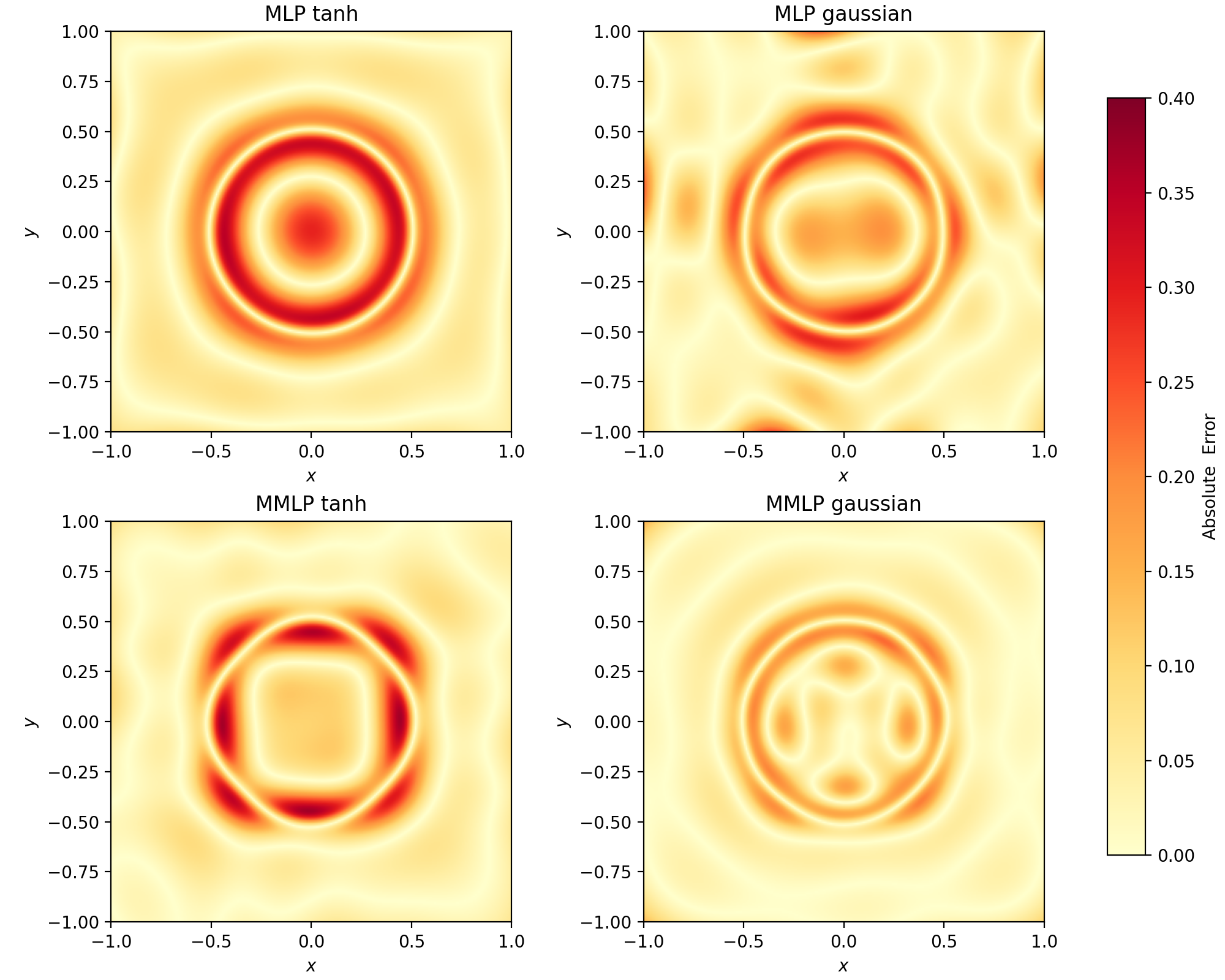}
    \caption{Figure 1: Absolute error for the mollified circle under the $L^2$ loss. 
    Top: MLP; bottom: MMLP; left: tanh; right: gaussian.
    The MMLP produces a more structured and geometrically aligned error pattern, whereas the MLP exhibits more spatially diffuse residuals away from the transition region.
    }
    \label{fig:circle:absErr}
\end{figure}

\subsubsection{Cone: point-type localization}
We next consider the cone example, where Figure \ref{fig:cone:absErr} displays the absolute error distributions for both architectures.

As in the previous example, the dominant approximation error is localized near the region of reduced regularity, here the origin. However, differences in the residual structure are again apparent, particularly for the Gaussian activation.
\begin{itemize}
    \item Concentration near the point of reduced regularity.
    Both architectures exhibit a clear error peak near the origin, reflecting the intrinsic difficulty of approximating the cone near this point.
    \item Structure of residual errors.
    Away from the origin, the MLP tends to produce more spatially dispersed residual errors, whereas the MMLP yields residuals that remain more localized near the origin and exhibit a more organized spatial pattern, with error structures that appear aligned with the coordinate directions. 
    \item Consistency across singular geometries.
    Despite the different regularity structures of the mollified circle and the cone, the same qualitative distinction between additive and multiplicative architectures persists.
\end{itemize}
The numerical results for both examples are consistent with the theoretical analysis, suggesting that the observed localization behavior depends on the network architecture and is not specific to a particular target geometry.
In particular, the numerical experiments highlight differences in the \emph{spatial structure} of approximation errors, describing where residual errors concentrate and how their geometry varies across architectures.

\begin{figure}[ht]\centering
    \includegraphics[width=0.7\linewidth]{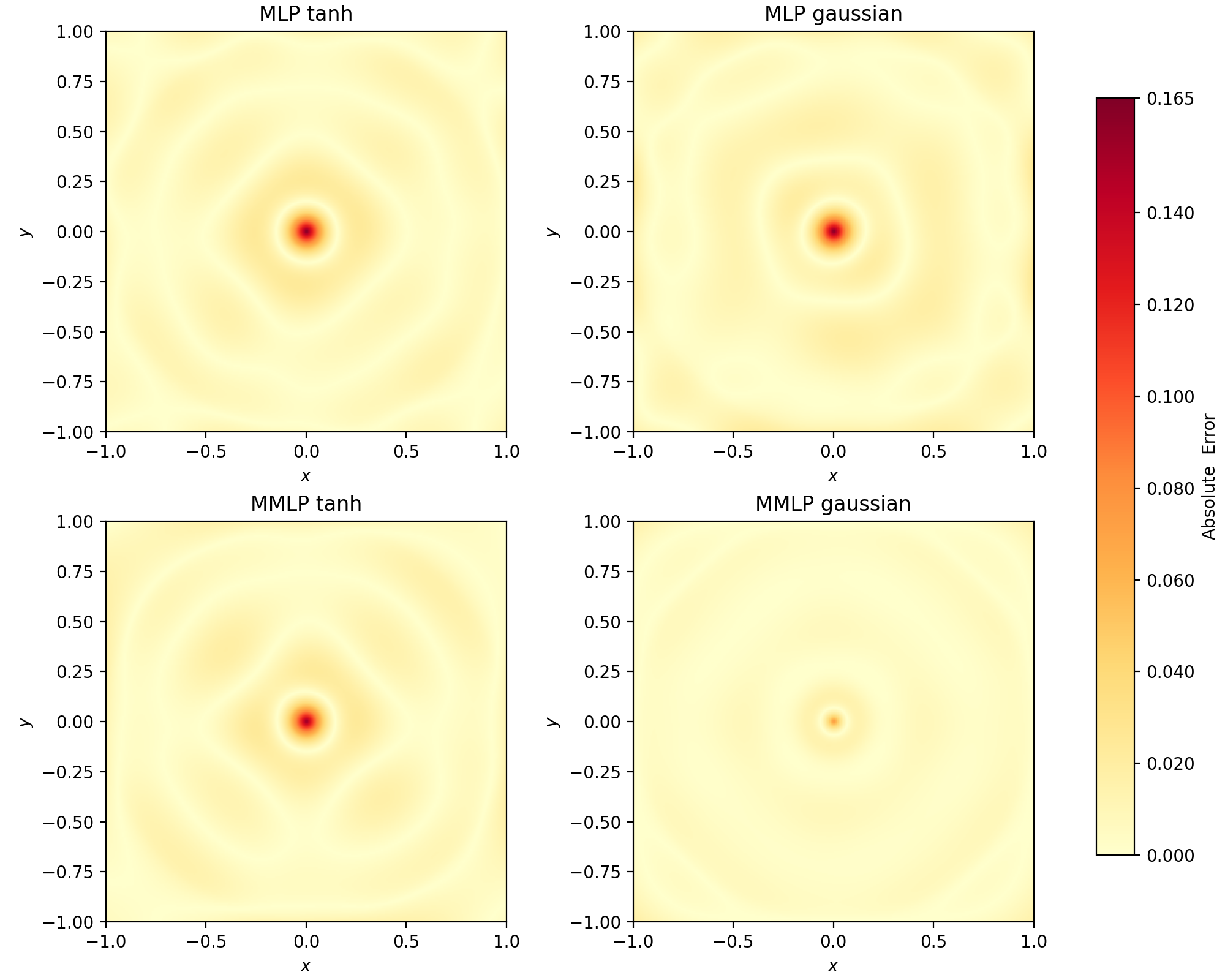}
    \caption{Figure 2: Absolute error for the cone target under the $L^2$ loss.
    Top: MLP; bottom: MMLP; left: tanh; right: gaussian.
    The MMLP produces residual errors that are more tightly localized and geometrically organized around the origin, whereas the MLP exhibits more spatially dispersed error patterns.
    }
\label{fig:cone:absErr}
\end{figure}

\subsection{Regularity-sensitive analysis via Zygmund-type seminorms}
\label{sec:numeric:zygmund}

The observations based on absolute error distributions in Section~\ref{sec:numeric:error_localization} describe where approximation errors occur and how they are spatially distributed, without addressing their behavior at small scales or their regularity properties.

In contrast, the present subsection focuses on regularity-sensitive quantities that capture the small-scale behavior of approximation errors.
In particular, we consider Zygmund-type seminorms, which quantify pointwise oscillations through second-order difference quotients and are directly related to the Sobolev--Zygmund embedding discussed in Sections~\ref{sec:intro} and~\ref{sec:theory}.

\emphTitle{Zygmund seminorms and pointwise oscillation}
Zygmund-type seminorms characterize regularity through second-order difference quotients.
At an informal level, they control expressions of the form
\begin{align}
    \sup_{x \in \Omega} \;
    \sup_{h \neq 0}
    \frac{\left| u(x+h) + u(x-h) - 2u(x) \right|}{|h|^{1+\alpha}} .
\end{align}
and thus quantify \emph{maximum pointwise oscillations} rather than averaged error magnitudes.
Convergence of such seminorms therefore indicates not merely a reduction in global approximation error, but an improvement in the stability of the approximation with respect to regularity variations.

In the numerical experiments below, we fix $\alpha=0.8$ and report errors in the corresponding $\C$-type Zygmund seminorm.
This choice is not essential: any exponent $\alpha\in(0,1)$ would lead to qualitatively similar behavior.
The value $\alpha=0.8$ is chosen as a representative interior exponent that balances sensitivity to oscillations with numerical stability.

\emphTitle{Cone example: $\L$ training}
We first consider the cone example trained using the $\L$ loss. This target exhibits a pointwise loss of second-order regularity at the origin, making it a natural test case for regularity-sensitive metrics.
Figure~\ref{fig:zyg:L2} shows the evolution of the $\H$-type error (top) and the Zygmund seminorm error with $\alpha=0.8$ (bottom).

For the additive architecture (MLP; the left panel), the Zygmund seminorm error decreases initially but then exhibits a pronounced slowdown, indicating an early saturation of regularity-sensitive convergence.
By contrast, the multiplicative architecture (MMLP; the right panel) displays a more sustained decay of the Zygmund seminorm over training iterations, suggesting improved stability with respect to maximum pointwise oscillations.
Notably, these differences cannot be inferred from the corresponding \(\H\)-type error curves alone, highlighting the complementary role of Zygmund-type seminorms as regularity-sensitive metrics.

\emphTitle{Cone example: $\H$-type training}
We next repeat the same experiment using the $\H$-type loss, which incorporates second-order information through a Laplacian-based penalty.
Figure~\ref{fig:zyg:H2} reports the corresponding $\H$-type and Zygmund seminorm errors.

Including second-order regularity information in the loss mitigates saturation effects for both architectures.
Nevertheless, the difference in convergence behavior observed under \(\L\) training persists:
the multiplicative architecture continues to exhibit a more stable and sustained decay of the Zygmund seminorm, while the additive architecture shows slower convergence in regularity-sensitive quantities.
This indicates that the observed behavior is not an artifact of the particular loss function, but reflects an underlying architectural effect.
\\

Taken together, the cone example illustrates that convergence of Zygmund-type seminorms captures aspects of approximation behavior that are not fully reflected by global error measures alone.
While both loss choices lead to comparable trends in $\L$- or $\H$-type errors, the evolution of regularity-sensitive quantities differs markedly between additive and multiplicative architectures.
These observations are consistent with the theoretical analysis of Sections~\ref{sec:intro} and~\ref{sec:theory}, where multiplicative constructions are associated with localized approximation aligned with pointwise regularity control.

\begin{figure}[ht]\centering
    \includegraphics[width=0.9\linewidth]{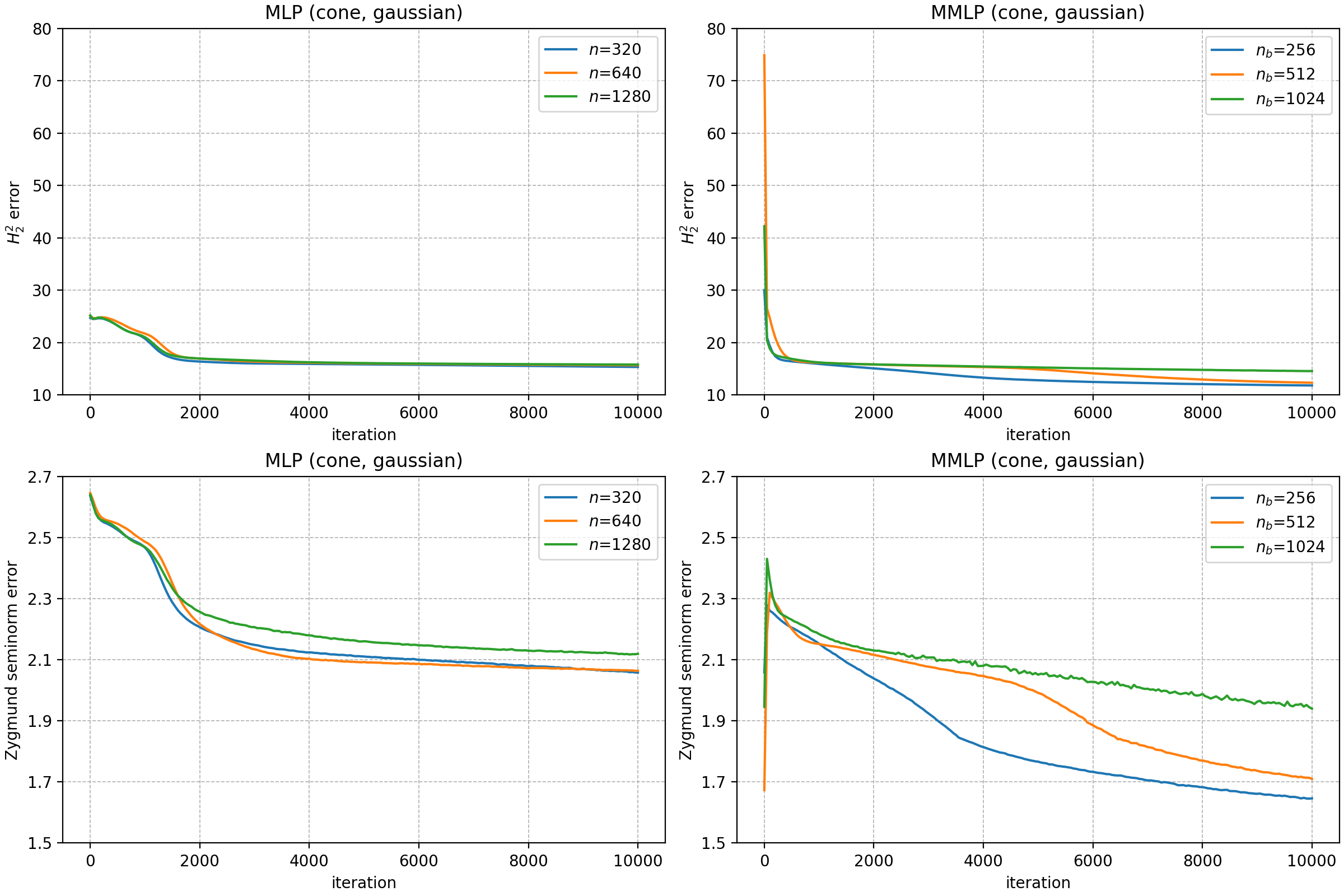}
    \caption{
    Convergence of regularity-sensitive error measures for the cone target under the $\L$ loss.
    Top: $\H$-type error; bottom: Zygmund seminorm $\C$ error.
    The MMLP exhibits a more sustained decay of the Zygmund seminorm, whereas the MLP shows earlier saturation.
    }
    \label{fig:zyg:L2}
\end{figure}

\begin{figure}[ht]\centering
    \includegraphics[width=0.9\linewidth]{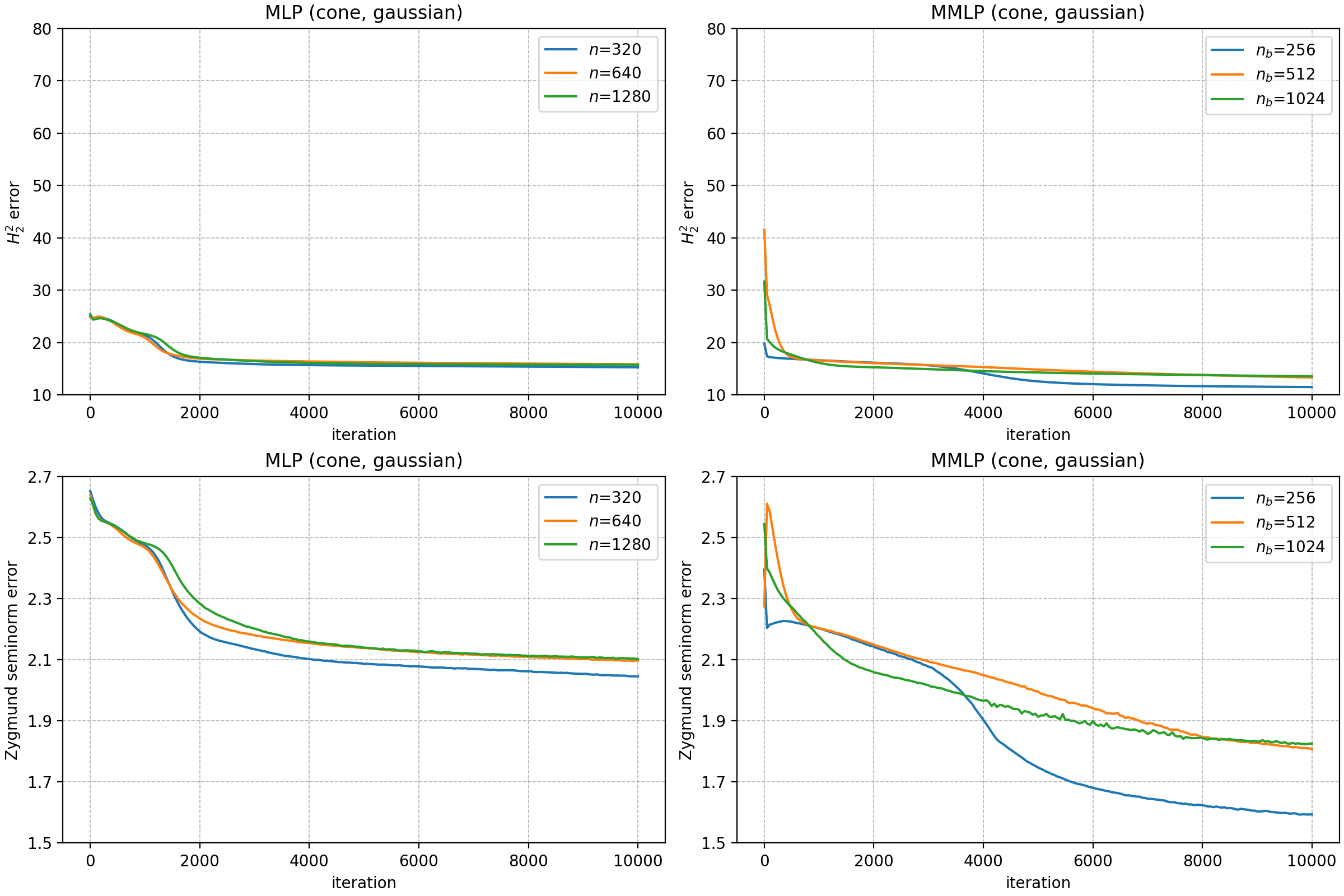}
    \caption{Convergence of regularity-sensitive error measures for the cone target under the $\H$ loss.
    Top: $\H$-type error; bottom: Zygmund seminorm $\C$ error.
    The MMLP exhibits a more sustained decay of the Zygmund seminorm, whereas the MLP shows earlier saturation.
    }
    \label{fig:zyg:H2}
\end{figure}

\appendix
\section{Architecture table}
\label{appendix:architecture}

Appendix \ref{appendix:architecture} provides a detailed summary of the network architectures and training configurations used in the numerical experiments. Table \ref{tab:arch_config} reports the exact neuron (or multiplicative block) configurations and the corresponding number of trainable parameters for both the additive (MLP) and multiplicative (MMLP) architectures.
All remaining training settings—including the optimizer, learning rate, number of iterations, activation functions, and batch size—are kept identical across architectures, ensuring a fair and controlled comparison. Detailed description on `$h$ for $\H$-type loss' is given in Appendix \ref{appendix:loss discretization}. 

\begin{table}[ht]\centering
\label{tab:arch_config}
\begin{tabular}{lcc}
\hline
\textbf{Configuration} 
& \textbf{MLP} 
& \textbf{MMLP} \\
\hline
Architecture type 
& Additive (MLP) 
& Multiplicative (MMLP) \\

Number of hidden layer
& 1 & 1\\

Neuron / block configuration 
& $n = (a) 320 \, (b) 640 \, (c)1280$ 
& $n_b = (a)256 \, (b)512 \, (c)1024$ \\

Total trainable parameters 
&  \multicolumn{2}{c}{$(a) 1281 \, (b) 2561 \, (c) 5121$}\\

Activation functions 
& \multicolumn{2}{c}{tanh, gaussian} \\

Optimizer 
& \multicolumn{2}{c}{Adam} \\

Learning rate 
& \multicolumn{2}{c}{$10^{-3}$} \\

Number of training iterations 
& \multicolumn{2}{c}{$10{,}000$} \\

Number of training samples
& \multicolumn{2}{c}{$50{,}000$} \\

Batch size 
& \multicolumn{2}{c}{$2{,}048$} \\

$h$ for $\H$-type loss:   
& \multicolumn{2}{c}{$1/128$} \\
\hline
\end{tabular}
\caption{Summary of network architectures and training configurations used in the numerical experiments. 
Except for the network architecture (MLP vs.\ MMLP) and the corresponding neuron counts, all training settings are kept identical.}
\end{table}

\section{Discretization of loss functions in implementation}
\label{appendix:loss discretization}

This appendix describes the numerical implementation of the loss functions used in the training procedure.
While the $\L$ loss follows a standard discretization, the $\H$-type loss involves a finite-difference approximation of second-order derivatives and therefore requires additional explanation.

\subsection{$\L$ loss}
Let $\Omega \subset \mathbb{R}^2$ denote the computational domain and let $\{x_i\}_{i=1}^M \subset \Omega$ be a uniform Cartesian grid used to discretize the loss.
The $\L$ loss is approximated by
\[
\mathcal{L}_{\L}(F)
=
\frac{1}{M}
\sum_{i=1}^M
\bigl|F(x_i) - f(x_i)\bigr|^2,
\]
where $F$ denotes the network output and $f$ the target function. This discretization is equivalent to the standard mean squared error (MSE) used in practice, up to a constant scaling factor. 

\subsection{$\H$-type loss}
To incorporate second-order regularity information, we consider an $\H$-type loss of the form
\[
\mathcal{L}_{\H}(F)
=
\mathcal{L}_{\L}(F)
+
\lambda \,
\mathcal{L}_{\Delta}(F),
\]
where
\[
\mathcal{L}_{\Delta}(F)
=
\frac{1}{M}
\sum_{i=1}^M
\bigl|
\Delta_h F(x_i) - \Delta_h f(x_i)
\bigr|^2,
\]
and $\Delta_h$ denotes a finite-difference approximation of the Laplacian.

\emphTitle{Finite-difference approximation}
The Laplacian $\Delta f$ is approximated using the standard second-order central finite-difference stencil on the uniform grid.
Let $h>0$ denote the grid spacing.
For interior points, the discrete Laplacian is given by
\[
\Delta_h f(x_{i,j})
=
\frac{
f(x_{i+1,j}) + f(x_{i-1,j}) + f(x_{i,j+1}) + f(x_{i,j-1}) - 4f(x_{i,j})
}{h^2}.
\]
Boundary points are treated using the same stencil, with target function values evaluated directly on the grid.

\emphTitle{Choice of the coefficient $\lambda$ and its effect on training behavior}
The coefficient $\lambda>0$ controls the relative weight of the Laplacian matching term in the $\H$-type loss.
In the experiments reported in this paper, $\lambda$ is fixed to a constant value across all runs.
This choice reflects the diagnostic role of the $\H$-type loss: the goal is not to optimize $\lambda$, but to examine how incorporating second-order information influences the convergence of regularity-sensitive quantities.

The effect of $\lambda$ must be interpreted in relation to the grid spacing $h$ used in the finite-difference approximation.
Since the discrete Laplacian $\Delta_h$ scales like $h^{-2}$, the Laplacian-based term amplifies high-frequency components relative to the $\L$ term.
Consequently, a different choice of $h$ would require a corresponding rescaling of $\lambda$ in order to maintain a comparable balance between the two contributions to the loss.

In the numerical experiments, the grid spacing is fixed to $h=1/128$, and the penalty parameter is set to $\lambda=10^{-2}$.
With this choice, the Laplacian matching term contributes at a scale that is comparable to, but does not dominate, the $\L$ term.
Empirically, this setting is sufficient to mitigate saturation effects in regularity-sensitive metrics while avoiding numerical instability or excessively slow convergence.

If $\lambda$ is chosen too small, the $\H$-type loss reduces effectively to the $\L$ loss and provides little additional regularity control.
Conversely, if $\lambda$ is chosen too large, the Laplacian term may dominate the optimization, leading to slower convergence or unstable training dynamics.
A systematic study of the optimal choice of $\lambda$ is beyond the scope of this work.
Here, $\lambda$ is fixed to a representative value in order to isolate qualitative effects of incorporating second-order information, rather than to achieve optimal performance.

\section{Discretization of Zygmund-type seminorms}
\label{appendix:zygmund}

This appendix describes the numerical evaluation of Zygmund-type seminorms used in the regularity-sensitive analysis of Section \ref{sec:numeric:zygmund}. 
Unlike the loss functions discussed in Appendix \ref{appendix:loss discretization}, the Zygmund seminorm is \emph{not} used for training, but solely as an evaluation quantity to assess pointwise regularity of the approximation.

\emphTitle{Continuous definition}

Let $F:\Omega\subset\mathbb{R}^2\to\mathbb{R}$ be a sufficiently regular function and let $\alpha\in(0,1)$.
The Zygmund seminorm corresponding to $C^{0,\alpha}$ regularity is defined by
\[
[F]_{C^{0,\alpha}}
=
\sup_{x\in\Omega}\;
\sup_{0<|h|\le h_{\max}}
\frac{|F(x+h)+F(x-h)-2F(x)|}{|h|^{\alpha}},
\]
where $h\in\mathbb{R}^2$ denotes a continuous increment vector and $h_{\max}$ is chosen so that $x\pm h\in\Omega$.

This seminorm measures pointwise oscillations through second-order difference quotients and is sensitive to local irregularities that are not captured by global Sobolev norms.

\emphTitle{Discrete approximation}

In the numerical experiments, the Zygmund seminorm is evaluated on a uniform Cartesian grid with spacing $h^z$.
For each grid point $x_{i,j}$ and for a prescribed set of discrete increments $h^z_k$, we compute
\[
\frac{
\bigl|F(x_{i,j}+h^z_k)+F(x_{i,j}-h^z_k)-2F(x_{i,j})\bigr|
}{
|h^z_k|^{\alpha}
},
\]
and take the maximum over all admissible increments and grid points.

In practice, the increments $h^z_k$ are chosen to align with the coordinate directions and their multiples, consistent with the grid resolution.
Boundary points are handled by restricting to increments for which all required grid values are available.

\emphTitle{Role of the grid spacing}

The discrete Zygmund seminorm depends explicitly on the grid spacing $h^z$, as the smallest admissible increment is of order $h^z$.
As $h^z\to 0$, the discrete seminorm provides an increasingly accurate approximation of its continuous counterpart.
In the present study, $h^z$ is fixed by the grid resolution, and all Zygmund seminorms are evaluated consistently on the same grid.

Since the seminorm is used only for comparative and qualitative analysis, no attempt is made to extrapolate its value to the continuum limit.
Instead, convergence and saturation behavior are assessed relative to the fixed discretization.

\emphTitle{Interpretation}

A decrease in the Zygmund seminorm during training indicates improved control of maximum pointwise oscillations and enhanced regularity of the approximation.
Conversely, saturation of the seminorm suggests that further reduction of global error does not translate into improved small-scale behavior.
For this reason, Zygmund-type seminorms provide a complementary diagnostic to $\L$- and $\H$-type error measures.

\bibliographystyle{plain}

\end{document}